\renewcommand{\baselinestretch}{\baselinestretch}
\renewcommand{\baselinestretch}{1.1}
\numberwithin{equation}{section}
\newtheorem{thm}{Theorem}[section]
\newcommand{\Mod}[1]{\ (\mathrm{mod}\ #1)}
\begin{document}

\title{Even universal sums of triangular numbers}
\author{Jangwon Ju}

\address{Department of Mathematics Education, Korea National University of Education, Cheongju-si, Chungbuk 28173, Republic of Korea}
\email{jangwonju@knue.ac.kr}
\thanks{}

\subjclass[2020]{11E12, 11E20}

\keywords{Triangular numbers, Even universal sums}

\begin{abstract}
For an arbitrary integer $x$, an integer of the form $T(x)\!=\!\frac{x^2+x}{2}$ is called a triangular number.
Let $\alpha_1,\dots,\alpha_k$ be positive integers. A sum $\Delta_{\alpha_1,\dots,\alpha_k}(x_1,\dots,x_k)=\alpha_1 T(x_1)+\cdots+\alpha_k T(x_k)$ of triangular numbers is said to be {\it even universal} if the Diophantine equation $\Delta_{\alpha_1,\dots,\alpha_k}(x_1,\dots,x_k)=2n$ has an integer solution $(x_1,\dots,x_k)\in\mathbb{Z}^k$ for any nonnegative integer $n$.
In this article, we classify all even universal sums of triangular numbers. 
Furthermore, we provide an effective criterion on even universality of an arbitrary sum of triangular numbers, which is a generalization of the triangular theorem of eight of Bosma and Kane.
\end{abstract}

\maketitle

\section{Introduction}
For an arbitrary integer $x$, an integer of the form $T(x)\!=\!\frac{x^2+x}{2}$ is called a triangular number.
For positive integers $\alpha_1,\dots,\alpha_k$, we say a sum 
$$
\Delta_{\alpha_1,\dots,\alpha_k}(x_1,\dots,x_k):=\alpha_1 P_3(x_1)+\cdots+\alpha_k P_3(x_k)
$$ 
of triangular numbers {\it represents} a nonnegative integer $n$ if the diophantine equation 
$$
\Delta_{\alpha_1,\dots,\alpha_k}(x_1,\dots,x_k)=n
$$
has an integer solution $(x_1,\dots,x_k)\in\mathbb{Z}^k$. 
Furthermore, a sum
$$
\Delta_{\alpha_1,\dots,\alpha_k}(x_1,\dots,x_k) \ (\text{simply}, \ \Delta_{\alpha_1,\dots,\alpha_k}) 
$$
of triangular numbers is called {\it universal} if it represents all nonnegative integers.

 In 1796, Gauss proved that every positive integer can be expressed as a sum of three triangular numbers which was first asserted by Fermat in 1638. 
In 1862, Liouville proved that for positive integers $a,b$, and $c~ (a\leq b\leq c)$, a ternary sum  $\Delta_{a,b,c}$ of triangular numbers is universal if and only if $(a,b,c)$ is one of the following triples:
\begin{equation}\label{Liouville}
(1,1,1),~ (1,1,2),~ (1,1,4),~ (1,1,5),~ (1,2,2),~ (1,2,3),~\text{and}~ (1,2,4).
\end{equation}

In 2013, Bosma and Kane proved the triangular theorem of eight which states that for positive integers $\alpha_1,\dots,\alpha_k$, an arbitrary sum $\Delta_{\alpha_1,\dots,\alpha_k}$ of triangular numbers is universal if and only if it represents $1,2,4,5$, and $8$ (for details, see \cite{BK}). 
This might be considered as a natural generalization of the ``15-theorem'' of Conway, Miller, and Schneeberger (for details see \cite{B}).

 If a sum $\Delta_{\alpha_1,\dots,\alpha_k}$ of triangular numbers represents all nonnegative integers except a single one, then it is said to be {\it almost universal with one exception}.
By the triangular theorem of eight of Bosma and Kane, if a sum $\Delta_{\alpha_1,\dots,\alpha_k}$ of triangular numbers is almost universal with one exception $m$, then $m$ is 1,2,4,5, or 8. 
In \cite{almost}, the author classified all almost universal sums of triangular numbers with one exception $1,2,4,5$, and $8$, respectively.

In 2009, Kane conjectured that  for positive integers $\alpha_1,\dots,\alpha_k$, an arbitrary sum $\Delta_{\alpha_1,\dots,\alpha_k}$ of triangular numbers  represents all positive odd integers if and only if it represents the integers
$$
1,\ 5,\ 7,\ 9,\ 11,\ 13,\ 17,\ 19,\ 25,\ 29,\ 35,\ 49,\ \text{and} \ 89.
$$
Actually, he proved the above theorem under the assuming GRH for $L$-functions of weight $2$ newforms (for details, see \cite{odd universal}). 

For positive integers $\alpha_1,\dots,\alpha_k$, a sum $\Delta_{\alpha_1,\dots,\alpha_k}$ of triangular numbers is called {\it even universal} if it represents all nonnegative even integers.
Furthermore, it is called {\it proper} if any proper partial sum of it doesn’t represent at least one nonnegative even integers.
By using an escalation method, we give a complete list of candidates of proper even universal sums of triangular numbers. 
Furthermore, we classify all even universal sums of triangular numbers, actually there are 15 ternary, 37 quaternary and 23 quinary sums of proper even universal sums of triangular numbers (see Table 1). 
Moreover, we provide an effective criterion on even universality of an arbitrary sum $\Delta_{\alpha_1,\dots,\alpha_k}$ of triangular numbers.
This might be considered as a natural generalization of the triangular theorem of eight of Bosma and Kane.
\begin{thm}\label{1}
A sums of triangular numbers is even universal if and only if it represents the integers
$$
2,\ 4,\ 8,\ 10,\ 14,\ 16,\ 26,\ \text{and}\ 40.
$$
\end{thm}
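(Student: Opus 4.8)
The plan is to prove Theorem~\ref{1} by an escalation/finite-check argument that mirrors the structure of the Bosma--Kane triangular theorem of eight, but adapted to the even setting. The "only if" direction is trivial: each of $2,4,8,10,14,16,26,40$ is a nonnegative even integer, so any even universal sum must represent all of them. The entire content lies in the "if" direction, and the strategy is to reduce the question of representing \emph{all} nonnegative even integers to representing the eight specific values in the list.

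\medskip

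First I would set up the escalation tree. Starting from the empty sum, I would repeatedly ask: given a sum $\Delta_{\alpha_1,\dots,\alpha_j}$ that represents $2,4,\dots$ up to the first even integer $t$ it \emph{fails} to represent (a so-called truant), I must adjoin a new term $\alpha_{j+1} T(x_{j+1})$ with $1\le \alpha_{j+1}\le t$ in order to have any chance of capturing $t$. This produces a finite, explicitly computable tree of candidate sums, and the claim is that every node at which all eight test values $2,4,8,10,14,16,26,40$ are represented is in fact even universal, while the truants that arise along the way are always drawn from this list. The key quantitative input is a bound showing that once a sum represents these eight values, it must represent every nonnegative even integer; this is where I would invoke the arithmetic theory of ternary quadratic forms, since $\alpha_1 T(x_1)+\cdots+\alpha_k T(x_k)=2n$ is, after completing the square ($8\alpha_i T(x_i)+\alpha_i=\alpha_i(2x_i+1)^2$), equivalent to representing $16n+\sum\alpha_i$ by the diagonal form $\sum \alpha_i y_i^2$ restricted to odd $y_i$.

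\medskip

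The heart of the argument, and the main obstacle, is handling the ternary escalations. For sums with four or more variables, local representability together with the escalation bound should force even universality by the 290/15-type machinery (or by direct reduction to Liouville's list \eqref{Liouville} after scaling), so these are essentially combinatorial once the ternary base cases are settled. But genuinely ternary even universal sums correspond to ternary quadratic forms, for which there is no local-global principle: a form can represent everything locally yet miss infinitely many integers lying in finitely many square classes (the anisotropic or "spinor exceptional" classes). Thus for each ternary candidate on my list I expect to need an argument beyond congruence conditions---either exhibiting an escalation into a quaternary form known to be even universal, or directly analyzing the relevant genus and its theta series to rule out any exceptional even integers. The test value $40$ in particular is suspicious: the smaller values $2,4,8,10,14,16,26$ likely suffice to kill all but a handful of ternary candidates, and $40$ is presumably the largest truant forced by one stubborn ternary sum whose exceptional square class is only detected at $40$.

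\medskip

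Concretely, the steps I would carry out are: (1) prove the elementary completing-the-square reduction from even universality to a constrained diagonal quadratic representation problem; (2) run the escalation algorithm by hand/computer through the binary, ternary, quaternary, and quinary stages, recording truants and terminating each branch once the test set is represented, thereby producing Table~1; (3) verify that every truant encountered lies in $\{2,4,8,10,14,16,26,40\}$, which establishes that these eight values form a sufficient test set; and (4) close the ternary branches using the theory of the genus of ternary forms, checking by hand that no exceptional even integer escapes each surviving ternary sum. The combination of (3) and (4) yields the theorem, since any sum representing all eight values must, by (2)--(3), escalate to (or already be) one of the finitely many sums in Table~1, each of which is even universal by (4).
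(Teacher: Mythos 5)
Your overall plan coincides with the paper's actual proof: both run the escalation method on the congruence-constrained diagonal forms obtained by completing the square, bound each new coefficient by the even truant of the preceding sum, verify that all truants encountered lie in $\{2,4,8,10,14,16,26,40\}$, and then check that every surviving candidate (the paper's Table 1) is even universal. The ``only if'' direction is indeed trivial, and steps (1)--(3) of your outline are exactly Sections 2--3 of the paper.

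However, there is a genuine gap in how you propose to close the branches with four or more variables. You claim these are ``essentially combinatorial'' once the ternary cases are settled --- that local representability plus 290/15-type machinery, or reduction to Liouville's list \eqref{Liouville} after scaling, suffices. This fails, and in fact the difficulty is distributed in exactly the opposite way. The ternary candidates are almost all dispatched immediately by Liouville's classification (the $\Delta_{a,b,c}$ and $\Delta_{2a,2b,2c}$ with $(a,b,c)$ in \eqref{Liouville}); only $\Delta_{1,1,8}$ needs a genus argument. The genuinely hard cases are quaternary and quinary sums such as $\Delta_{1,1,6,6}$, $\Delta_{1,1,7,7}$, $\Delta_{2,2,3,3}$, and $\Delta_{2,2,5,6,\alpha_5}$: each of these is even universal but \emph{not} universal (each misses an odd integer such as $1$ or $5$), so the Bosma--Kane theorem cannot be invoked; they contain no even universal ternary section and are not scalings of Liouville triples, so no reduction to \eqref{Liouville} exists; and the parity constraint $x_1\cdots x_k\equiv 1\Mod 2$ prevents any appeal to an unconstrained local-global principle for quaternary forms. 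The paper handles them by fixing suitable odd values $a_4,\dots,a_k$ for all but three variables and representing $16n+\sum\alpha_i-\sum_{j\geq4}\alpha_j a_j^2$ by a ternary form with congruence conditions, using class-number-one genus arguments (102:5 of \cite{om}) and spinor-exceptional-integer analysis from \cite{almost} --- precisely the ternary-form work you reserved only for the three-variable branches. Relatedly, your guess about the role of $40$ is backwards: it is not the truant of a ``stubborn ternary sum'' but the even truant of the quaternary sum $\Delta_{1,1,7,14}$ (and of the quinary $\Delta_{1,1,7,14,35}$), so dropping it from the test set would be detected only at the four- and five-variable stages of the escalation.
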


Let
$\displaystyle
f(x_1,x_2,\dots,x_k)=\sum_{1 \le i, j\le k} a_{ij} x_ix_j \ (a_{ij}=a_{ji} \in \mathbb Z)
$
 be a positive definite integral quadratic form. 
For an integer $n$, we say $n$ is {\it represented by $f$} if the equation $f(x_1,x_2,\dots,x_k)=n$ has an integer solution $(x_1,x_2,\dots,x_k)\in\mathbb{Z}^k$, which is denoted by $n\longrightarrow f$. 
The genus of $f$, denoted by $\text{gen}(f)$, is the set of all quadratic forms that are isometric to $f$ over the $p$-adic integer ring $\mathbb{Z}_p$ for any prime $p$. 
The number of isometry classes in $\text{gen}(f)$ is called the class number of $f$.

A good introduction to the theory of quadratic forms may be found in \cite{om}, and we adopt the notations and terminologies from this book.


\section{General tools}
For positive integers $\alpha_1,\dots,\alpha_k~(k\geq1)$, we define
$$
\Delta_{\alpha_1,\dots,\alpha_k}(x_1,\dots,x_k)=\alpha_1T(x_1)+\cdots+\alpha_kT(x_k).
$$
Recall that a sum 
$$
\Delta_{\alpha_1,\dots,\alpha_k}(x_1,\dots,x_k) \ (\text{simply}, \ \Delta_{\alpha_1,\dots,\alpha_k})
$$ 
of triangular numbers is called  even universal if it represents all nonnegative even integers,
which is equivalent to the existence of an integer solution $(x_1,\dots,x_k)\in\mathbb{Z}^k$ of 
$$
\alpha_1(2x_1+1)^2+\cdots+\alpha_k(2x_k+1)^2=16n+\alpha_1+\cdots+\alpha_k 
$$
for any nonnegative integer $n$.
Furthermore, this is equivalent to the existence of an integer solution $(x_1,\dots,x_k)\in\mathbb{Z}^k$ of  
\begin{equation}\label{congruence condition}
\alpha_1x_1^2+\cdots+\alpha_kx_k^2=16n+\alpha_1+\cdots+\alpha_k~\text{with}~x_1\cdots x_k\equiv1\Mod2
\end{equation}
for any nonnegative integer $n$.

 In Section 3, we show that there are no unary or binary even universal sums of triangular numbers. 
 Furthermore, we prove that there are exactly 15 ternary even universal sums of triangular numbers. 
 In most cases,  their even universalities was proved by the result of Liouville.

To prove the even universality of a sum of triangular numbers $\Delta_{\alpha_1,\dots,\alpha_k}$ $(k\geq4)$, we use the similar strategy used to prove the almost universality of a sum of triangular numbers in \cite{almost}.
We briefly introduce this strategy for those who are unfamiliar with it.


Without loss of generality, we may assume that $\alpha_1\leq\cdots\leq\alpha_k$.
At first, take the ternary section $\Delta_{\alpha_1,\alpha_2,\alpha_3}$ of $\Delta_{\alpha_1,\dots,\alpha_k}$.
We consider the equation
\begin{equation}\label{ternary form}
\alpha_1 x_1^2+\alpha_2 x_2^2+\alpha_3 x_3^2=16n+\alpha_1+\alpha_2+\alpha_3 ~\text{with}~ x_1x_2x_3\equiv1\Mod2.
\end{equation}
Note that Equation \eqref{ternary form} corresponds to the representations by a ternary quadratic form with congruence conditions. 
Work of Oh \cite{regular, pentagonal} and work of Oh and the author \cite{4-8} led to the development of a method that determines whether or not integers in an arithmetic progression are represented by some particular ternary quadratic form.
Therefore, we try to find a suitable method on reducing Equation \eqref{ternary form} to the representations of a ternary quadratic form, denoted by $f(x_1,x_2,x_3)$, without congruence conditions.
To explain our method, for example, assume that $\alpha_1\equiv\alpha_2\equiv0\Mod2$ and $\alpha_3 \equiv1\Mod2$. 
Then Equation \eqref{ternary form} has an integer solution if 
\begin{equation}\label{without congruence condition}
f(x_1,x_2,x_3)=\alpha_1(x_3-2x_1)^2+\alpha_{2}(x_3-2x_{2})^2+\alpha_3x_3^2=16n+\alpha_1+\alpha_2+\alpha_3
\end{equation}
has an integer solution. 
Hence, in this case, the problem can be reduced to the representations of a ternary quadratic form without congruence conditions.

After that for sufficiently large $n$, we find suitable $a_4,\dots,a_k\in\mathbb{Z}$ such that
\begin{equation}\label{conditions}
\begin{array}{rl}
\rm{(i)}   &  a_4\cdots a_k\equiv1\Mod2; \\
\rm{(ii)}  &  16n+\alpha_1+\cdots+\alpha_k-(\alpha_4 a_4^2+\cdots+\alpha_k a_k^2)\geq0; \\
\rm{(iii)} &  16n+\alpha_1+\cdots+\alpha_k-(\alpha_4 a_4^2+\cdots+\alpha_k a_k^2) \longrightarrow f(x_1,x_2,x_3). \\
\end{array}
\end{equation}
Then we know that Equation \eqref{congruence condition} has an integer solution.
Finally, we directly check that the sum $\Delta_{\alpha_1,\alpha_2,\dots,\alpha_k}$ of triangular numbers represents all remaining small even integers.

\section{Sums of triangular numbers representing all even integers}

Let $k\geq 1$. For positive integers $\alpha_1,\dots,\alpha_k$, we say an even universal sum $\Delta_{\alpha_1,\dots,\alpha_k}$ of triangular numbers is {\it proper} if for any proper subset $\{\alpha_{i_1},\dots,\alpha_{i_u}\}$ of $\{\alpha_1,\dots,\alpha_k\}$, the partial sum $\Delta_{\alpha_{i_l},\dots,\alpha_{i_u}}$ doesn't represent at least one nonnegative even integers. 

 The first nonnegative (nonnegative even) integer that is not represented by $\Delta_{\alpha_1,\dots,\alpha_k}$ is called the {\it  truant} ({\it  even truant}) of $\Delta_{\alpha_1,\dots,\alpha_k}$ and denoted by 
 $$
 \mathfrak{T}(\Delta_{\alpha_1,\dots,\alpha_k}) \quad (\mathfrak{T}_e(\Delta_{\alpha_1,\dots,\alpha_k}), \text{ respectively}) 
 $$
 if it exists.
If a sum $\Delta_{\alpha_1,\dots,\alpha_k}$ of triangular numbers is even universal, then we define $\mathfrak{T}_e(\Delta_{\alpha_1,\dots,\alpha_k})=\infty$.
We say every positive integer is less than $\infty$ for convenience.

For a sum $\Delta_{\alpha_1,\dots,\alpha_k}$ of triangular numbers, without loss of generality, we may assume that 
$\alpha_1\leq\cdots\leq\alpha_k$.\
We say $\Delta_{\alpha_1,\dots,\alpha_k}$ is a {\it candidate} of even universal sums of triangular numbers if it satisfies the following conditions:
\begin{enumerate}
\item[(i)]  $\alpha_1 \leq 2$;
\item[(ii)] if $k\geq2$, then $\alpha_i\leq \mathfrak{T}_e(\Delta_{\alpha_1,\dots,\alpha_{i-1}})  \text{~for~} 2\leq i \leq k$.
\end{enumerate}
Assume that $\Delta_{\alpha_1,\dots,\alpha_k}(\alpha_1\leq\dots\leq\alpha_k)$ is even universal.
Since it represents $2$, we know that $\alpha_1 \leq 2$.
When $k\geq2$, for $2\leq i\leq k$, if $\Delta_{\alpha_1,\dots,\alpha_{i-1}}$ is not even universal, then  $\alpha_i\leq \mathfrak{T}_e(\Delta_{\alpha_1,\dots,\alpha_{i-1}})$ since it represents $\mathfrak{T}_e(\Delta_{\alpha_1,\dots,\alpha_{i-1}})$.
If $\Delta_{\alpha_1,\dots,\alpha_{i-1}}$ is even universal, then $\alpha_i\leq \mathfrak{T}_e(\Delta_{\alpha_1,\dots,\alpha_{i-1}})$ since $\mathfrak{T}_e(\Delta_{\alpha_1,\dots,\alpha_{i-1}})=\infty$.
Therefore, every even universal sum of triangular numbers satisfies the above conditions.

Let $\alpha_1,\dots,\alpha_k$ be positive integers with $\alpha_1\leq\cdots\leq\alpha_k$. Assume that a sum $\Delta_{\alpha_1,\dots,\alpha_k}$ of triangular numbers is a candidate of even universal sums of triangular numbers.
When $k=1$, we know that $1\leq\alpha_1\leq2$  from the definition of the candidate of even universal sums of triangular numbers.
One may easily check that
\begin{equation}\label{unary ET}
\mathfrak{T}_e(\Delta_1)=2 \quad\text{and}\quad \mathfrak{T}_e(\Delta_2)=4. 
\end{equation}
Therefore, there are no unary even universal sums of triangular numbers.

Let $k=2$. 
Form \eqref{unary ET}, we have five candidates 
$$
\Delta_{1,1}, \ \Delta_{1,2}, \ \Delta_{2,2}, \ \Delta_{2,3}, \text{ and } \Delta_{2,4}
$$
of binary even universal sums of triangular numbers.
One may easily check that
\begin{equation}\label{binary ET}
\mathfrak{T}_e(\Delta_{\alpha_1,\alpha_2})=
\begin{cases}
{\setlength\arraycolsep{1pt}
\begin{array}{ll}
8~ &\text{if}~ (\alpha_1,\alpha_2)=(1,1),\\
4~ &\text{if}~ (\alpha_1,\alpha_2)=(1,2),\\
10~ &\text{if}~ (\alpha_1,\alpha_2)=(2,2),\\
4~ &\text{if}~ (\alpha_1,\alpha_2)=(2,3),\\
8~ &\text{if}~ (\alpha_1,\alpha_2)=(2,4).\\
\end{array}}
\end{cases}
\end{equation}
Therefore, there are no binary even universal sums of triangular numbers.

Let $k=3$.
From, \eqref{binary ET}, we know that there are 27 candidate $\Delta_{\alpha_1,\alpha_2,\alpha_3}$ of even universal sums of triangular numbers since $\alpha_3\leq \mathfrak{T}_e(\Delta_{\alpha_1,\alpha_2})$ in each possible case.
One may easily check that 12 sums of them don't represent at least one even integer.
Actually, one may easily check that
\begin{equation}\label{ternary ET}
\mathfrak{T}_e(\Delta_{\alpha_1,\alpha_2,\alpha_3})=
\begin{cases}
{\setlength\arraycolsep{1pt}
\begin{array}{llllll}
&\mathfrak{T}_e(\Delta_{\alpha_1,\alpha_2}) ~&\text{if}&~(\alpha_1,\alpha_2,\alpha_3)&=&(1,1,3),(2,2,3),(2,2,5),(2,2,7)\\
&~&~&~&~&(2,2,9),(2,3,3),(2,4,5),(2,4,7)\\ 
&14 ~ &\text{if}&~(\alpha_1,\alpha_2,\alpha_3)&=&(1,1,6),\\
&26 ~ &\text{if}&~(\alpha_1,\alpha_2,\alpha_3)&=&(1,1,7),\\ 
&16 ~ &\text{if}&~(\alpha_1,\alpha_2,\alpha_3)&=&(2,2,6),\\
&8 ~ &\text{if}&~(\alpha_1,\alpha_2,\alpha_3)&=&(2,3,4).\\  
\end{array}}
\end{cases}
\end{equation}

Now, we prove that remaining 15 ternary candidates represent all even integers.
If $(\alpha_1,\alpha_2,\alpha_3)$ is contained in the seven triples of \eqref{Liouville}, then clearly $\Delta_{\alpha_1,\alpha_2,\alpha_3}$ and  $\Delta_{2\alpha_1,2\alpha_2,2\alpha_3}$ are even universal. 
Let $(\alpha_1,\alpha_2,\alpha_3)=(1,1,8)$.
By Equation \eqref{congruence condition}, it suffices to show that the equation
\begin{equation}\label{118}
x^2+y^2+8z^2=16n+10
\end{equation}
has an integer solution $(x,y,z)\in\mathbb{Z}^3$ such that $xyz\equiv1\Mod2$ for any nonnegative integer $n$.
Let $f(x,y,t)=x^2+(2y+z)^2+8z^2$.
For any nonnegative integer $n$, $16n+10$ is represented by $f$ since it is not a sinor exceptional integer of $\text{gen}(f)$
(for details, see Section 5 of \cite{almost}).
Therefore, Equation \eqref{118} has an integer solution $(x,y,z)\in\mathbb{Z}^3$ such that $xyz\equiv1\Mod2$ for any nonnegative integer $n$.

Now, assume that $k\geq4$. 
 Note that the ternary section $\Delta_{\alpha_1,\alpha_2,\alpha_3}$ of $\Delta_{\alpha_1,\dots,\alpha_k}$  is one of the above 27 ternary candidates of even universal sums of triangular numbers.
 If $\mathfrak{T}_e(\Delta_{\alpha_1,\alpha_2,\alpha_3})=\infty$, then it implies that $\alpha_4,\dots,\alpha_k$ can be any integers with $\alpha_3\leq \alpha_4\leq\cdots\leq\alpha_k$. 
 In this cases, $\Delta_{\alpha_1,\alpha_2,\alpha_3\alpha_4,\dots\alpha_k}$ is even universal but not proper.
 Assume that $\mathfrak{T}_e(\Delta_{\alpha_1,\alpha_2,\alpha_3})\neq\infty$. 
In this cases, we classify all proper even universal sums $\Delta_{\alpha_1,\alpha_2,\alpha_3\alpha_4,\dots\alpha_k}$ of triangular numbers according to its ternary section $\Delta_{\alpha_1,\alpha_2,\alpha_3}$.

\vskip1pc
\noindent\textbf{(i)} Let $(\alpha_1,\alpha_2,\alpha_3)=(1,1,3)$. For $k=4$, by the definition of the candidate of even universal sums of triangular numbers, we know that $3\leq \alpha_4 \leq 8$.
If $\alpha_4=4,5,8$, then $\Delta_{1,1,3,\alpha_4}$ is even universal but not proer.
If $\alpha_4=3,6,7$, then one may easily show that $\Delta_{1,1,3,\alpha_4}$ is universal by the triangular theorem of eight of Bosma and Kane. 

For $k\geq5$, all candidate $\Delta_{1,1,3,\alpha_4,\dots,\alpha_k}$ of even universal sums of triangular numbers are even universal but not proper since $\mathfrak{T}_e( \Delta_{1,1,3,\alpha_4})=\infty$.  
Therefore, if $k\geq 5$, then there are no $k$-ary proper even universal sums of triangular numbers.

 \vskip1pc
\noindent\textbf{(ii)}  Let $(\alpha_1,\alpha_2,\alpha_3)=(1,1,6)$.  
For $k=4$, by  the definition of the candidate of even universal sums of triangular numbers, we know that $6\leq \alpha_4 \leq 14$.
If $\alpha_4=8$, then $\Delta_{1,1,6,8}$ is even universal but not proer.
For $\alpha_4=7,10,11,12,13,14$,  in \cite{almost} it was proved that each $\Delta_{1,1,6,\alpha_4}$ represents all nonnegative integers except $5$. 

Assume that $\alpha_4=6$.
We show that $\Delta_{1,1,6,6}$ is even universal.
By Equation \eqref{congruence condition}, it suffices to show that the equation
\begin{equation}\label{1166}
x^2+y^2+6z^2+6t^2 = 16n+14
\end{equation}
has an integer solution $(x,y,z,t)\in\mathbb{Z}^4$ such that  $xyzt\equiv1 \Mod2$ for any nonnegative integer $n$.
The class number of $f(x,y,z)=x^2+y^2+6z^2$ is one.
From Table 11 of  \cite{almost}, for a nonnegative integer $m$, if $m\equiv0\Mod{8}$ and $m\neq3^{2u+1}(3v+1)$ for any nonnegative integers $u$ and $v$, then $m$ is represented by $f$.
Let $16n+14=3^{2\ell}(16k+14)$ for some nonnegative integers $\ell$ and $k$ such that $16k+14\not\equiv0\Mod{9}$.
If $0\leq k\leq2$,  then the equation
$$
x^2+y^2+6z^2+6t^2=16k+14
$$
has an integer solution $(x,y,z,t)\in\mathbb{Z}^4$ such that $xyzt\equiv1\Mod2$.
Assume that $k\geq3$.
One may easily check that $16k+14-6d^2$ is represented by $f$ over $\mathbb{Z}_p$ for any $p$, where
$$
d=\begin{cases}
3\quad\text{if}~16k+14\equiv6\Mod9,\\
1\quad\text{otherwise},
\end{cases}
$$
in particular, it is primitively represented by $f$ over $\mathbb{Z}_2$. 
Furthermore, since we are assuming $k\geq3$, $16k+14-6d^2$ is positive.
By 102:5 of \cite{om}, the equation 
$$
x^2+y^2+6z^2=16k+14-6d^2
$$ 
has an integer solution $(x,y,z)\in\mathbb{Z}^3$ such that $xyz\equiv1\Mod2$. 
Therefore, $\Delta_{1,1,6,6}$ is even universal.

Assume that $\alpha_4=9$.
Since $\mathfrak{T}_e(\Delta_{1,1,6,9})=14$, $\Delta_{1,1,6,9}$ is not an even universal sum of triangular numbers.

For $k\geq5$, if $\Delta_{1,1,6,\alpha_4,\dots,\alpha_k}$ is a candidate of even universal sums of triangular numbers, then one of the following holds:
\begin{enumerate}
\item $\mathfrak{T}_e(\Delta_{1,1,6,\alpha_4})=\infty$;
\item $\alpha_4=\cdots=\alpha_{k-1}=9$ and $10\leq\alpha_k\leq14$;
\item $\alpha_4=\cdots=\alpha_k=9$.
\end{enumerate}
In the first case, clearly $\Delta_{1,1,6,\alpha_4,\dots,\alpha_k}$ is even universal but not proper.
In the second case, since $\mathfrak{T}_e(\Delta_{1,1,6,\alpha_k})=\infty$, $\Delta_{1,1,6,\alpha_4,\dots,\alpha_k}$ is even universal but not proper .
In the third case, $\mathfrak{T}_e(\Delta_{1,1,6,\alpha_4\dots,\alpha_k})=14$ since  $\mathfrak{T}(\Delta_{1,1,6,\alpha_4\dots,\alpha_{k-1}})=5$ and $\mathfrak{T}_e(\Delta_{1,1,6,\alpha_4\dots,\alpha_{k-1}})=14$. 
Therefore, if $k\geq 5$, then there are no $k$-ary proper even universal sums of triangular numbers.
\vskip1pc
\noindent\textbf{(iii)}  Let $(\alpha_1,\alpha_2,\alpha_3)=(1,1,7)$.  
For $k=4$, by the definition of the candidate of even universal sums of triangular numbers, we know that $7\leq \alpha_4 \leq 26$.
If $\alpha_4=8$, then $\Delta_{1,1,7,8}$ is even universal but not proer.
Assume $7\leq \alpha_4 \leq 26$ and $\alpha_4\neq7,8,14,21$.  
In \cite{almost}, it was proved that each $\Delta_{1,1,7,\alpha_4}$ represents all nonnegative integers except $5$.

Assume that $\alpha_4=7$.
We show that $\Delta_{1,1,7,7}$ is even universal.
By Equation \eqref{congruence condition}, it suffices to show that the equation
\begin{equation}\label{1166}
x^2+y^2+7z^2+7t^2 = 16n+16
\end{equation}
has an integer solution $(x,y,z,t)\in\mathbb{Z}^4$ such that  $xyzt\equiv1 \Mod2$ for any nonnegative integer $n$. 
Let $f(x,y,z)=(2x+y)^2+y^2+7z^2$.
From Table 12 of \cite{almost}, for a nonnegative integer $m$, if $m>1$, $m\equiv1\Mod8$, $m\not\equiv0\Mod{49}$ and $m\neq7^{2u+1}(7v+r)$ for $r\in\{3,5,6\}$ and any nonnegative integers $u$ and $v$, then $m$ is represented by $f$.
Let $16n+16=7^{2\ell}(16k)$ for some nonnegative integers $\ell$ and $k$ such that $16k\not\equiv0\Mod{49}$.
If $1\leq k\leq21$,  then the equation
$$
x^2+y^2+7z^2+7t^2=16k
$$
has an integer solution $(x,y,z,t)\in\mathbb{Z}^4$ such that $xyzt\equiv1\Mod2$.
Assume that $k\geq22$.
One may easily check that  $16k-7d^2$ is positive and represented by $f$, where 
$$
d=
\begin{cases}
7 \quad\text{if}~ 16k\equiv7,14,28 \Mod{49},\\
3 \quad\text{if}~ 16k\equiv42 \Mod{49},\\
1 \quad\text{otherwise}.
\end{cases}
$$ 
Therefore,  the equation
$$
x^2+y^2+7z^2+7t^2=16k
$$
has an integer solution $(x,y,z,t)\in\mathbb{Z}^4$ such that $xyzt\equiv1\Mod2$.

Assume that $\alpha_4=14,21$.
Since $\mathfrak{T}_e(\Delta_{1,1,7,14})=40$ and $\mathfrak{T}_e(\Delta_{1,1,7,21})=26$ , $\Delta_{1,1,7,14}$ and $\Delta_{1,1,7,21}$ are not even universal.

For $k=5$, if $\Delta_{1,1,7,\alpha_4,\alpha_5}$ is a candidate of even universal sums of triangular numbers, then one of the following holds:
\begin{enumerate}
\item $\mathfrak{T}_e(\Delta_{1,1,7,\alpha_4})=\infty$;
\item $\alpha_4=14$ and $14\leq\alpha_5\leq40$;
\item $\alpha_4=21$ and $21\leq\alpha_5\leq26$.
\end{enumerate}
 In the first case, clearly $\Delta_{1,1,7,\alpha_4}$ is even universal but not proper. 
 In the second case, if $15\leq\alpha_5\leq20$ or $22\leq\alpha_5\leq26$, then $\Delta_{1,1,7,14,\alpha_5}$ is even universal but not proper since $\mathfrak{T}_e(\Delta_{1,1,7,\alpha_5})=\infty$. 
In \cite{almost}, we proved that if $\alpha_5=14,21$ or $27\leq\alpha_5\leq40$ and $\alpha_5\neq35$, then  $\Delta_{1,1,7,14,\alpha_5}$ represents all nonnegative integers except $5$. 
If $\alpha_5=35$, then one may easily check that $\mathfrak{T}_e(\Delta_{1,1,7,14,35})=40$.
In the third case, if $22\leq\alpha_5\leq26$, then $\Delta_{1,1,7,21,\alpha_5}$ is even universal but not proper.
If $\alpha_5=21$, then one may easily check that $\mathfrak{T}_e(\Delta_{1,1,7,21,21})=26$.

For $k\geq6$, if $\Delta_{1,1,7,\alpha_4,\alpha_5,\dots,\alpha_k}$ is a candidate of even universal sums of triangular numbers, then one of the following hold:
\begin{enumerate}
\item $\mathfrak{T}_e(\Delta_{1,1,7,\alpha_4,\alpha_5})=\infty$;
\item $\alpha_4=14$, $\alpha_5=\cdots=\alpha_{k-1}=35$ and $36\leq\alpha_k\leq40$;
\item $\alpha_4=14$, $\alpha_5=\cdots=\alpha_{k}=35$;
\item $\alpha_4=\cdots=\alpha_{k-1}=21$ and $22\leq\alpha_k\leq26$;
\item $\alpha_4=\cdots=\alpha_{k}=21$.
\end{enumerate}
In the first case, clearly $\Delta_{1,1,7,\alpha_4,\alpha_5,\dots,\alpha_k}$ is even universal but not proper.
In the second and fourth cases, since $\mathfrak{T}_e(\Delta_{1,1,7,\alpha_4,\alpha_5,\alpha_k})=\infty$, $\Delta_{1,1,7,\alpha_4,\alpha_5,\dots,\alpha_k}$ is even universal but not proper. 
In the third  case, $\mathfrak{T}_e(\Delta_{1,1,7,\alpha_4,\alpha_5,\dots,\alpha_k})=40$ since  $\mathfrak{T}(\Delta_{1,1,7,\alpha_4,\alpha_5,\dots,\alpha_{k-1}})=5$ and $\mathfrak{T}_e(\Delta_{1,1,7,\alpha_4,\alpha_5,\dots,\alpha_{k-1}})=40$.
Similarly, in the fifth case, one may easily check that $\mathfrak{T}_e(\Delta_{1,1,7,\alpha_4,\alpha_5,\dots,\alpha_k})=26$.
Therefore, if $k\geq 6$, then there are no $k$-ary proper even universal sums of triangular numbers.

\vskip1pc
\noindent\textbf{(iv)}  Let $(\alpha_1,\alpha_2,\alpha_3)=(2,2,3)$.  
For $k=4$, by the definition of the candidate of even universal sums of triangular numbers, we know that $3\leq \alpha_4 \leq 10$.
If $\alpha_4=4,8,10$, then $\Delta_{2,2,3,\alpha_4}$ is even universal but not proer.
In \cite{almost}, we proved that if $\alpha_4=5,7$, then $\Delta_{2,2,3,\alpha_4}$ represents all nonnegative integers except $1$.

Assume that $\alpha_4=3$.
We show that $\Delta_{2,2,3,3}$ is even universal.
By Equation \eqref{congruence condition}, it suffices to show that the equation
\begin{equation}\label{1166}
2x^2+2y^2+3z^2+3t^2 = 16n+10
\end{equation}
has an integer solution $(x,y,z,t)\in\mathbb{Z}^4$ such that  $xyzt\equiv1 \Mod2$ for any nonnegative integer $n$. 
Let $f(x,y,z)=2(4x+y)^2+2y^2+3z^2$.
In Section 3 of \cite{almost}, it was proved that for a nonnegative integer $m$, if $m\equiv7\Mod8$, $m\neq3^{2u+1}(3v+2)$ for any nonnegative integers $u$ and $v$, then $m$ is represented by $f$.
Let $16n+10=3^{2\ell}(16k+10)$ for some nonnegative integers $\ell$ and $k$ such that $16k\not\equiv0\Mod{9}$.
If $0\leq k\leq1$,  then the equation
$$
2x^2+2y^2+3z^2+3t^2=16k+10
$$
has an integer solution $(x,y,z,t)\in\mathbb{Z}^4$ such that $xyzt\equiv1\Mod2$.
Assume that $k\geq2$.
One may easily check that  $16k+10-3d^2$ is positive and represented by $f$, where 
$$
d=
\begin{cases}
3 \quad\text{if}~ 16k+10\equiv3 \Mod{9},\\
1 \quad\text{otherwise}.
\end{cases}
$$ 
Therefore,  the equation
$$
2x^2+2y^2+3z^2+3t^2=16k+10
$$
has an integer solution $(x,y,z,t)\in\mathbb{Z}^4$ such that $xyzt\equiv1\Mod2$.

Assume that $\alpha_4=6,9$.
Since $\mathfrak{T}_e(\Delta_{2,2,3,6})=16$ and $\mathfrak{T}_e(\Delta_{2,2,3,9})=10$ , $\Delta_{2,2,3,6}$ and $\Delta_{2,2,3,9}$ are not even universal.

For $k=5$, if $\Delta_{2,2,3,\alpha_4,\alpha_5}$ is a candidate of even universal sums of triangular numbers, then one of the following holds:
\begin{enumerate}
\item $\mathfrak{T}_e(\Delta_{2,2,3,\alpha_4})=\infty$;
\item $\alpha_4=6$ and $6\leq\alpha_5\leq16$;
\item $\alpha_4=9$ and $9\leq\alpha_5\leq10$.
\end{enumerate}
 In the first case, clearly $\Delta_{2,2,3,\alpha_4,\alpha_5}$ is even universal but not proper. 
 In the second case, if $\alpha_5=6,7,8,10,12,14,16$, then $\Delta_{2,2,3,6,\alpha_5}$ is even universal but not proper(see case (vi)). 
In \cite{almost}, we proved that if $\alpha_5=9,11,13$, then  $\Delta_{2,2,3,6,\alpha_5}$ represents all nonnegative integers except $1$. 
If $\alpha_5=15$, then one may easily check that $\mathfrak{T}_e(\Delta_{2,2,3,6,15})=16$.
In the third case, $\Delta_{2,2,3,9,10}$ is even universal but not proper.
One may easily check that $\mathfrak{T}_e(\Delta_{2,2,3,9,9})=10$.

For $k\geq 6$, similarly as in the proof of (iii), one may easily prove that there are no  $k$-ary proper even universal sums of triangular numbers.

\vskip1pc
\noindent\textbf{(v)}  Let $(\alpha_1,\alpha_2,\alpha_3)=(2,2,5)$.  
For $k=4$, by the definition of the candidate of even universal sums of triangular numbers, we know that $5\leq \alpha_4 \leq 10$.
If $\alpha_4=8,10$, then $\Delta_{2,2,5,\alpha_4}$ is even universal but not proer.

Assume that $\alpha_4=5$.
We show that $\Delta_{2,2,5,5}$ is even universal.
From \eqref{Liouville}, we know that $\Delta_{2,2,10}$ is even  universal. 
Therefore, $\Delta_{2,2,5,5}$ is even universal.

Assume that $\alpha_4=6,7,9$.
Since $\mathfrak{T}_e(\Delta_{2,2,5,6})=16$, $\mathfrak{T}_e(\Delta_{2,2,5,7})=10$ and $\mathfrak{T}_e(\Delta_{2,2,5,9})=10$ , $\Delta_{2,2,5,6}$, $\Delta_{2,2,5,7}$ and $\Delta_{2,2,5,9}$ are not even universal.

For $k=5$, if $\Delta_{2,2,5,\alpha_4,\alpha_5}$ is a candidate of even universal sums of triangular numbers, then one of the following holds:
\begin{enumerate}
\item $\mathfrak{T}_e(\Delta_{2,2,5,\alpha_4})=\infty$;
\item $\alpha_4=6$ and $6\leq\alpha_5\leq16$;
\item $\alpha_4=7,9$ and $\alpha_4\leq\alpha_5\leq10$.
\end{enumerate}
In the first case, clearly $\Delta_{2,2,5,\alpha_4,\alpha_5}$ is even universal but not proper. 
In the second case, If $\alpha_5=6,8,10,12,14,16$, then $\Delta_{2,2,5,6,\alpha_5}$ is even universal but not proper (see case (vi)). 
Now, we show that $\Delta_{2,2,5,6}$ represents all even integers except 16.
By Equation \eqref{congruence condition}, it suffices to show that the equation
\begin{equation}
2x^2+2y^2+5z^2+6t^2 = 16n+15
\end{equation}
has an integer solution $(x,y,z,t)\in\mathbb{Z}^4$ such that  $xyzt\equiv1 \Mod2$ for any nonnegative integer $n$.
Let $f(x,y,z)=2(2x+y)^2+2y^2+6t^2$.
From Table 14 of \cite{almost}, one may easily prove that for a nonnegative integer $m$, if $m\equiv10\Mod{16}$ and $m\neq3^{2u+1}(3v+1)$ for any nonnegative integers $u$ and $v$, then $m$ is represented by $f$.
If $0\leq n\leq24$ and $n\neq8$,  then the equation
$$
2x^2+2y^2+5z^2+6t^2=16n+15
$$
has an integer solution $(x,y,z,t)\in\mathbb{Z}^4$ such that $xyzt\equiv1\Mod2$.
Assume that $k\geq25$.
One may easily check that  $16n+15-5d^2$ is positive and represented by $f$, where 
$$
d=
\begin{cases}
1 \quad\text{if}~ 16n+15\equiv0 \Mod{3},\\
9 \quad\text{if}~ 16n+15\not\equiv0 \Mod{3}.
\end{cases}
$$ 
Therefore,  the equation
$$
2x^2+2y^2+5z^2+6t^2=16n+15
$$
has an integer solution $(x,y,z,t)\in\mathbb{Z}^4$ such that $xyzt\equiv1\Mod2$ for any nonnegative integer $n$ except 8.
Therefore, If $\alpha_5=7,9,11$, then one may easily show that $\Delta_{2,2,5,6,\alpha_5}$ is even universal.
If $\alpha_5=13,15$, then one may easily check that $\mathfrak{T}_e(\Delta_{2,2,5,6,13})=\mathfrak{T}_e(\Delta_{2,2,5,6,15})=16$.
In the third case, one may easily show that $\Delta_{2,2,5,\alpha_4,\alpha_5}$ is even universal but not proper or $\mathfrak{T}_e(\Delta_{2,2,5,\alpha_4,\alpha_5})=10$.


For $k\geq 6$, similarly as in the proof of (iii), one may easily prove that there are no  $k$-ary proper even universal sums of triangular numbers.
\vskip1pc
\noindent\textbf{(vi)} Let $(\alpha_1,\alpha_2,\alpha_3)=(2,2,6)$. For $k=4$, by the definition of the candidate of even universal sums of triangular numbers, we know that $6\leq \alpha_4 \leq 16$.
If $\alpha_4=8,10$, then $\Delta_{2,2,6,\alpha_4}$ is even universal but not proer.
If $\alpha_4=6,12,14,16$, then one may easily show that $\Delta_{2,2,6,\alpha_4}$ is even universal by the triangular theorem of eight of Bosma and Kane. 
If $\alpha_4=7,9,11,13,15$, then $\mathfrak{T}_e(\Delta_{2,2,6,\alpha_4})=16$.

For $k=5$, if $\Delta_{2,2,6,\alpha_4,\alpha_5}$ is a candidate of even universal sums of triangular numbers, then one of the following holds:
\begin{enumerate}
\item $\mathfrak{T}_e(\Delta_{2,2,6,\alpha_4})=\infty$;
\item $\alpha_4=7$ and $7\leq\alpha_5\leq16$;
\item $\alpha_4=9,11,13,15$ and $\alpha_4\leq\alpha_5\leq16$.
\end{enumerate}
 In the first case, clearly $\Delta_{2,2,6,\alpha_4}$ is even universal but not proper. 
 In the second case, if $\alpha_5=8,10,12,14,16$, then $\Delta_{2,2,6,7,\alpha_5}$ is even universal but not proper.
 If $\alpha_5=7,9$, then  $\Delta_{2,2,6,7,7}$ and  $\Delta_{2,2,6,7,9}$ are even universal since  $\Delta_{2,2,6,14}$ and  $\Delta_{2,2,6,16}$ are even universal.
 If $\alpha_5=11,13,15$, then $\mathfrak{T}_e(\Delta_{2,2,6,7,\alpha_5})=16$.
In the third case, one may easily show that $\Delta_{2,2,6,\alpha_4,\alpha_4}$ is even universal but not proper or $\mathfrak{T}_e(\Delta_{2,2,6,\alpha_4,\alpha_4})=16$.

For $k\geq 6$, similarly as in the proof of (iii), one may easily prove that there are no  $k$-ary proper even universal sums of triangular numbers.

\vskip1pc
\noindent\textbf{(vii)} Let $(\alpha_1,\alpha_2,\alpha_3)=(2,2,7)$. For $k=4$, by the definition of the candidate of even universal sums of triangular numbers, we know that $7\leq \alpha_4 \leq 10$.
If $\alpha_4=8,10$, then $\Delta_{2,2,7,\alpha_4}$ is even universal but not proer.
If $\alpha_4=7,9$, then one may easily check that$\mathfrak{T}_e(\Delta_{2,2,7,7})=\mathfrak{T}_e(\Delta_{2,2,7,9})=10$.
So there are no quaternary proper even universal sums of triangular numbers.

For $k\geq 5$, similarly as in the proof of (ii), one may easily prove that there are no  $k$-ary proper even universal sums of triangular numbers.

\vskip1pc
\noindent\textbf{(viii)} Let $(\alpha_1,\alpha_2,\alpha_3)=(2,2,9)$. For $k=4$, by the definition of the candidate of even universal sums of triangular numbers, we know that $9\leq \alpha_4 \leq 10$.
One may easily check that $\Delta_{2,2,9,10}$ is even universal but not proper and $\mathfrak{T}_e(\Delta_{2,2,9,9})=10$.
So there are no quaternary proper even universal sums of triangular numbers.

For $k\geq 5$, similarly as in the proof of (ii), one may easily prove that there are no  $k$-ary proper even universal sums of triangular numbers.

\vskip1pc
\noindent\textbf{(ix)} Let $(\alpha_1,\alpha_2,\alpha_3)=(2,3,3)$. For $k=4$, by the definition of the candidate of even universal sums of triangular numbers, we know that $3\leq \alpha_4 \leq 4$.
In \cite{almost}, we proved that $\Delta_{2,3,3,4}$ is almost universal with one exception 1. 
One may easily check that $\mathfrak{T}_e(\Delta_{2,3,3,3})=4$.

For $k\geq 5$, similarly as in the proof of (ii), one may easily prove that there are no  $k$-ary proper even universal sums of triangular numbers.

\vskip1pc
\noindent\textbf{(x)} Let $(\alpha_1,\alpha_2,\alpha_3)=(2,3,4)$. For $k=4$, by the definition of the candidate of even universal sums of triangular numbers, we know that $4\leq \alpha_4 \leq 8$.
If $\alpha_4=4,6,8$, then $\Delta_{2,3,4,\alpha_4}$ is even universal but not proper.
In \cite{almost}, we proved that $\Delta_{2,3,4,5}$ is almost universal with one exception 1. 
One may easily check that $\mathfrak{T}_e(\Delta_{2,3,4,7})=8$.

For $k\geq 5$, similarly as in the proof of (ii), one may easily prove that there are no  $k$-ary proper even universal sums of triangular numbers.

\vskip1pc
\noindent\textbf{(xi)} Let $(\alpha_1,\alpha_2,\alpha_3)=(2,4,5)$. For $k=4$, by the definition of the candidate of even universal sums of triangular numbers, we know that $5\leq \alpha_4 \leq 8$.
If $\alpha_4=6,8$, then $\Delta_{2,4,5,\alpha_4}$ is even universal but not proper.
One may easily check that $\mathfrak{T}_e(\Delta_{2,4,5,5})=\mathfrak{T}_e(\Delta_{2,4,5,7})=8$.
So there are no quaternary proper even universal sums of triangular numbers.

For $k\geq 5$, similarly as in the proof of (ii), one may easily prove that there are no  $k$-ary proper even universal sums of triangular numbers.

\vskip1pc
\noindent\textbf{(xii)} Let $(\alpha_1,\alpha_2,\alpha_3)=(2,4,7)$. For $k=4$, by the definition of the candidate of even universal sums of triangular numbers, we know that $7\leq \alpha_4 \leq 8$.
One may easily check that $\Delta_{2,4,7,8}$ is even universal but not proper and $\mathfrak{T}_e(\Delta_{2,4,7,7})=8$.
So there are no quaternary proper even universal sums of triangular numbers.

For $k\geq 5$, similarly as in the proof of (ii), one may easily prove that there are no  $k$-ary proper even universal sums of triangular numbers.

\begin{table}[hpt!]
\caption{Proper even universal sums of triangular numbers}
\renewcommand{\arraystretch}{1}\renewcommand{\tabcolsep}{1mm}
\begin{tabular}{l|ll}
\hline
Sums & Conditions on $\alpha_k$ &$(3\leq k \leq5)$ \\
\hline\hline
$\Delta_{1,1,\alpha_3}$ & $1\leq\alpha_3\leq8$, &$\alpha_3\neq3,6,7$ \\ \hline
$\Delta_{1,2,\alpha_3}$ & $2\leq\alpha_3\leq4$ &\\ \hline
$\Delta_{2,2,\alpha_3}$ & $2\leq\alpha_3\leq10$, &$\alpha_3\neq3,5,6,7,9$ \\ \hline
$\Delta_{2,4,\alpha_3}$ & $4\leq\alpha_3\leq8$, &$\alpha_3\neq5,7$ \\ \hline\hline
$\Delta_{1,1,3,\alpha_4}$ & $3\leq\alpha_4\leq8$, &$\alpha_4\neq4,5,8$ \\ \hline
$\Delta_{1,1,6,\alpha_4}$ & $6\leq\alpha_4\leq14$, &$\alpha_4\neq8,9$ \\ \hline
$\Delta_{1,1,7,\alpha_4}$ & $7\leq\alpha_4\leq26$, &$\alpha_4\neq8,14,21$ \\ \hline
$\Delta_{2,2,3,\alpha_4}$ & $3\leq\alpha_4\leq10$, &$\alpha_4\neq4,6,8,9,10$ \\ \hline
$\Delta_{2,2,5,\alpha_4}$ & $5\leq\alpha_4\leq10$, &$\alpha_4\neq6,7,8,9,10$ \\ \hline
$\Delta_{2,2,6,\alpha_4}$ & $6\leq\alpha_4\leq16$, &$\alpha_4\neq7,8,9,10,11,13,15$ \\ \hline
$\Delta_{2,3,3,\alpha_4}$ & $3\leq\alpha_4\leq4$, &$\alpha_4\neq3$ \\ \hline
$\Delta_{2,3,4,\alpha_4}$ & $4\leq\alpha_4\leq8$, &$\alpha_4\neq4,6,7,8$ \\ \hline\hline
$\Delta_{1,1,7,14,\alpha_5}$ & $14\leq\alpha_5\leq40$, &$\alpha_5\neq15,16,17,18,19,20,22,23,24,25,26,35$ \\ \hline
$\Delta_{2,2,3,6,\alpha_5}$ & $6\leq\alpha_5\leq16$, &$\alpha_5\neq6,7,8,10,12,14,15,16$ \\ \hline
$\Delta_{2,2,5,6,\alpha_5}$ & $6\leq\alpha_5\leq16$, &$\alpha_5\neq6,8,10,12,13,14,15,16$ \\ \hline
$\Delta_{2,2,6,7,\alpha_5}$ & $7\leq\alpha_5\leq16$, &$\alpha_5\neq8,10,11,12,13,14,15,16$ \\ \hline

\end{tabular}
\end{table}

\begin{proof}[Proof of Theorem \ref{1}]
Assume that a sum $\Delta_{\alpha_1,\alpha_2,\dots,\alpha_k}$ of triangular numbers represents the integers
$$
2,~4,~8,~10,~14,~16,~26,~\text{and}~40.
$$
By using the same escalation method to the above, one may easily show that $k\geq3$ and  
there is a subset $\{\alpha_{i_1},\dots,\alpha_{i_\ell}\}$ of $\{\alpha_1,\dots,\alpha_k\}$ with $3\leq\ell\leq5$ such that $\Delta_{\alpha_{i_1},\dots,\alpha_{i_\ell}}$ is contained in 75  proper even universal sums of triangular numbers in Table 1.
Therefore, $\Delta_{\alpha_1,\dots,\alpha_k}$ is even universal.
This completes the proof.

\end{proof}


\end{document}